\theoremstyle{plain}
\newtheorem{theorem}{Theorem}
\numberwithin{equation}{section}
\begin{document}

\title {Marcel Riesz on N\"orlund Means}

\date{}

\author[P.L. Robinson]{P.L. Robinson}

\address{Department of Mathematics \\ University of Florida \\ Gainesville FL 32611  USA }

\email[]{paulr@ufl.edu}

\subjclass{} \keywords{}

\begin{abstract}

We note that the necessary and sufficient conditions established by Marcel Riesz for the inclusion of {\it regular} N\"orlund summation methods are in fact applicable quite generally. 

\end{abstract}

\maketitle

\section*{Introduction} 

\medbreak 

One of the simplest classes of summation methods for divergent series was introduced independently by N\"orlund [2] in 1920 and by Voronoi in 1901 with an annotated English translation [5] by Tamarkin in 1932. Explicitly, let $(p_n : n \geqslant 0)$ be a real sequence, with $p_0 > 0$ and with $p_n \geqslant 0$ whenever $n > 0$; when $n \geqslant 0$ let us write $P_n = p_0 + \cdots + p_n.$ To each sequence $s = (s_n : n \geqslant 0)$ is associated the sequence $N^p s$ of {\it N\"orlund means} defined by 
$$ (N^ps)_m = \frac{p_0 s_m + \cdots + p_m s_0}{p_0 + \cdots + p_m} = \frac{1}{P_m} \sum_{n = 0}^m p_{m - n} s_n.$$
We say that the sequence $s$ is $(N, p)$-{\it convergent} to $\sigma$ precisely when the sequence $N^p s$ converges to $\sigma$ in the ordinary sense, writing this as 
$$s \xrightarrow{(N, p)} \sigma$$
or as $s \rightarrow \sigma \; (N, p)$; viewing the formation of N\"orlund means as a summation method, when $(s_n : n \geqslant 0)$ happens to be the sequence of partial sums of the series $\sum_{n \geqslant 0} a_n$ we may instead say that this series is $(N, p)$-{\it summable} to $\sigma$ and write 
$$\sum_{n = 0}^{\infty} a_n = \sigma \: (N, p).$$

\medbreak 

An important question regarding N\"orlund summation methods (and summation methods in general) concerns inclusion. We say that $(N, q)$ {\it includes} $(N, p)$ precisely when each $(N, p)$-convergent sequence is $(N, q)$-convergent to the same limit; equivalently, when each $(N, p)$-summable series is $(N, q)$-summable to the same sum. This relationship will be symbolized by $(N, p) \rightsquigarrow (N, q)$. The important notion of regularity may be seen as a special case of inclusion: the N\"orlund method $(N, q)$ is said to be {\it regular} precisely when each ordinarily convergent sequence is $(N, q)$-convergent to the same limit; that is, precisely when $(N, u) \rightsquigarrow (N, q)$ where $u_0 = 1$ and where $u_n = 0$ whenever $n > 0$. Precise necessary and sufficient conditions for the {\it regular} N\"orlund method $(N, q)$ to include the {\it regular} N\"orlund method $(N, p)$ were determined by Marcel Riesz and communicated to Hardy in a letter, an extract from which appeared as [3]. The line of argument indicated by Riesz in his letter was amplified by Hardy in his classic treatise `{\it Divergent Series}' [1], which we recommend for further information regarding summation methods in general and N\"orlund methods in particular. 

\medbreak 

Our primary purpose here is to point out that the necessary and sufficient `Riesz' conditions in fact apply to N\"orlund methods quite generally, without regularity hypotheses. 

\medbreak 

\section*{Inclusive Riesz Conditions}

\medbreak 

A celebrated theorem of Silverman, Steinmetz and Toeplitz gives necessary and sufficient conditions for a linear summation method to be regular, and proves to be very useful. The infinite matrix $[c_{m, n} : m, n \geqslant 0]$ yields a linear summation method $C$ by associating to each sequence $s = (s_n : n \geqslant 0)$ a corresponding sequence $t = (t_m : m \geqslant 0)$ given by 
$$t_m := \sum_{n = 0}^{\infty} c_{m, n} s_n$$
assumed convergent; to say that this linear summation method is {\it regular} is to say that, whenever the sequence $s$ is convergent, the sequence $t$ is convergent and $\lim_{m \rightarrow \infty} t_m = \lim_{n \rightarrow \infty} s_n$. The Silverman-Steinmetz-Toeplitz theorem may now be stated as follows. 

\medbreak 

\begin{theorem} \label{SST} 
The linear summation method $C$ with matrix $[c_{m, n} : m, n \geqslant 0]$ is regular precisely when each of the following conditions is satisfied: \\
{\rm (i)} there exists $H \geqslant 0$ such that for each $m \geqslant 0$ 
$$\sum_{n = 0}^{\infty} |c_{m, n}| \leqslant H;$$\\
{\rm (ii)} for each $n \geqslant 0$ 
$$\lim_{m \rightarrow \infty} c_{m, n} = 0;$$\\
{\rm (iii)} $$\lim_{m \rightarrow \infty} \sum_{n = 0}^{\infty} c_{m, n} = 1.$$
\end{theorem}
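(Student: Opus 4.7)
The plan is to split the argument into sufficiency and necessity, with the necessity of (i) being the only delicate step.

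For sufficiency, suppose (i), (ii), (iii) hold and $s_n \to s$. Condition (i) ensures that $t_m = \sum_n c_{m,n} s_n$ converges absolutely for every bounded sequence $s$, so the method is at least defined on all convergent $s$. Writing $s_n = s + \varepsilon_n$ with $\varepsilon_n \to 0$, one decomposes
$$t_m = s \sum_{n = 0}^{\infty} c_{m, n} \; + \; \sum_{n = 0}^{\infty} c_{m, n} \, \varepsilon_n;$$
the first summand tends to $s$ by (iii). For the error sum, given $\eta > 0$ one fixes $N$ so that $|\varepsilon_n| < \eta$ for $n > N$, bounds the tail over $n > N$ by $H\eta$ using (i), and then lets $m \to \infty$ so that the finite head vanishes term-by-term by (ii).

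For the necessity of (ii) and (iii), I would simply test regularity on two explicit families. Applying $C$ to the sequence $e^{(k)}$ having $1$ in position $k$ and $0$ elsewhere (which converges to $0$) gives $t_m = c_{m,k}$ and hence $c_{m,k} \to 0$, yielding (ii); applying $C$ to the constant sequence $(1, 1, 1, \ldots)$ gives $t_m = \sum_n c_{m,n}$ and hence $\sum_n c_{m,n} \to 1$, yielding (iii).

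The main obstacle is the necessity of (i), where positivity of the $c_{m,n}$ is not available. My preferred route is the uniform boundedness principle: each map $\ell_m : s \mapsto \sum_n c_{m, n} s_n$ is a bounded linear functional on the Banach space $c$ of convergent sequences with the supremum norm, and a short calculation (obtained by testing on finitely-supported sign sequences and passing to a limit) identifies its operator norm as $\sum_n |c_{m, n}|$. Regularity implies $\ell_m(s) \to \lim s$ for every $s \in c$, so the orbit $(\ell_m(s))_{m \geqslant 0}$ is bounded for each $s$, and Banach--Steinhaus then delivers a uniform bound on the norms $\|\ell_m\|$, which is precisely (i). A self-contained alternative avoiding functional analysis is the classical gliding hump: assuming $\sup_m \sum_n |c_{m, n}| = \infty$, one inductively selects indices $m_k$ together with disjoint finite windows of column indices on which a carefully signed null sequence $s$ is supported, arranged so that $|t_{m_k}| \to \infty$, contradicting regularity.
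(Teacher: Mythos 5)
Your argument is correct, but it is worth noting that the paper does not prove this statement at all: it simply cites it as Theorem 2 of Hardy's \emph{Divergent Series}, so any honest proof you supply is ``different from the paper's'' by default. What you have written is the standard modern proof. The sufficiency half (split off the limit, control the tail by (i) and the head by (ii), use (iii) for the main term) and the necessity of (ii) and (iii) by testing on $e^{(k)}$ and on $(1,1,1,\ldots)$ coincide with the classical argument in Hardy. For the necessity of (i), Hardy's route is the self-contained gliding-hump construction you mention as an alternative; your primary route via Banach--Steinhaus is cleaner but hides one preliminary step you should make explicit: before you can speak of $\ell_m$ as a bounded functional on $c$ with norm $\sum_n |c_{m,n}|$, you must first know that each row is absolutely summable, and that is not an explicit hypothesis --- it follows from the assumed convergence of $\sum_n c_{m,n}s_n$ for every convergent (indeed every null) $s$ by applying Banach--Steinhaus once to the partial-sum functionals $s \mapsto \sum_{n=0}^N c_{m,n}s_n$ for fixed $m$. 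Your phrase ``testing on finitely-supported sign sequences and passing to a limit'' computes the norm once boundedness is known, but does not by itself establish the finiteness of $\sum_n |c_{m,n}|$. With that one layer added, the uniform-boundedness argument is complete; the trade-off against the gliding hump is the usual one of brevity versus elementarity.
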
 

\begin{proof} 
This appears conveniently as Theorem 2 in [1]. 
\end{proof}

\medbreak 

Now, let $(N, p)$ and $(N, q)$ be N\"orlund summation methods, or N\"orlunds for short. As $p_0$ is nonzero, the (triangular Toeplitz) system 
$$q_n = k_0 p_n + \cdots + k_n p_0 \; \; \; (n \geqslant 0)$$
is solved (recursively) by a unique sequence $k = (k_n : n \geqslant 0)$ of comparison coefficients; by summation, it follows that whenever $n \geqslant 0$ also 
$$Q_n = k_0 P_n + \cdots + k_n P_0.$$
The comparison sequence $k$ generates a (formal) power series 
$$k(x) = \sum_{n \geqslant 0} k_n x^n$$
while the N\"orlund sequences $p$ and $q$ also generate their own power series; the convolution relation $q = k * p$ between sequences corresponds to the relation 
$$q(x) = k(x) p(x)$$
between generating functions. We remark that if the N\"orlunds $(N, p)$ and $(N, q)$ are regular, their power series $p(x)$ and $q(x)$ converge whenever $|x| < 1$; the nonvanishing of $p(0) = p_0$ then ensures that the power series $k(x)$ converges when $|x|$ is small. 

\medbreak 

The introduction of the sequence $(k_n : n \geqslant 0)$ of comparison coefficients facilitates the following convenient expression for the N\"orlund means determined by $(N, q)$ in terms of the N\"orlund means determined by $(N, p)$. 

\medbreak 

\begin{theorem} \label{NN}
If $r = (r_n : n \geqslant 0)$ is any sequence then 
$$ (N^q r)_m = \sum_{n = 0}^{\infty} c_{m, n} (N^p r)_n$$
where if $n > m$ then $c_{m, n} = 0$ while if $n \leqslant m$ then $c_{m, n} = k_{m - n} P_n / Q_m$. 
\end{theorem}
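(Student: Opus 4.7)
The plan is a direct computation: substitute the stated formula for $c_{m,n}$ and the definition of $(N^p r)_n$ into the right-hand side, swap the order of summation, and invoke the convolution relation $q = k * p$ to recover $(N^q r)_m$.

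First I would observe that since $c_{m,n} = 0$ for $n > m$, the right-hand side is in fact a finite sum running from $n=0$ to $n=m$. Inserting $c_{m,n} = k_{m-n} P_n / Q_m$ together with $(N^p r)_n = P_n^{-1} \sum_{j=0}^{n} p_{n-j} r_j$, the $P_n$ factors cancel cleanly and the expression reduces to
$$\frac{1}{Q_m} \sum_{n=0}^{m} k_{m-n} \sum_{j=0}^{n} p_{n-j} r_j.$$

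Next I would interchange the order of the two finite summations, so that the coefficient multiplying $r_j$ becomes $\sum_{n=j}^{m} k_{m-n} p_{n-j}$. Reindexing with $i = n - j$ rewrites this inner sum as $\sum_{i=0}^{m-j} k_{m-j-i} \, p_i$, which is precisely the convolution coefficient $(k * p)_{m-j} = q_{m-j}$ supplied by the defining relation $q = k * p$. The whole expression therefore collapses to $Q_m^{-1} \sum_{j=0}^{m} q_{m-j} r_j$, which is exactly $(N^q r)_m$.

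There is essentially no obstacle in this argument: everything in sight is a finite sum so the swap of summations is unconditional, and the substance of the proof is the single observation that the comparison coefficients $k_n$ were constructed precisely so that the bracketed inner sum reassembles into $q_{m-j}$. In short, Theorem \ref{NN} is simply the translation of the convolution identity $q = k * p$ into the language of N\"orlund means.
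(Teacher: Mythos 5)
Your proof is correct and is essentially the paper's own argument run in the opposite direction: the paper starts from $Q_m (N^q r)_m = q_0 r_m + \cdots + q_m r_0$, expands each $q_j$ via $q = k * p$, and regroups into $k_0 P_m (N^p r)_m + \cdots + k_m P_0 (N^p r)_0$, which is exactly your finite double-sum rearrangement read backwards. Nothing further is needed.
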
 

\begin{proof} 
Direct calculation: simply take the definition 
$$Q_m (N^q r)_m = q_0 r_m + \cdots + q_m r_0$$
and rearrange thus 
$$k_0 p_0 r_m + \cdots + (k_0 p_m + \cdots + k_m p_0) r_0 = k_0 (p_0 r_m + \cdots + p_m r_0) + \cdots + k_m (p_0 r_0)$$
to obtain 
$$Q_m (N^q r)_m = k_0 P_m (N^p r)_m + \cdots + k_m P_0 (N^p r)_0.$$
\end{proof} 

\medbreak 

We note that this result appears in the proof of [1] Theorem 19 but is there recorded only for regular N\"orlunds and established by comparing power series expansions; the argument presented here (essentially due to N\"orlund) is taken from [1] Theorem 17 and comes directly from the comparison coefficients without involving regularity.  

\medbreak 

The Riesz conditions ${\bf R}_{p q}$ associated to the N\"orlunds $(N, p)$ and $(N, q)$ may now be stated as follows: 
\medbreak 
${\bf R}_{p q}^1$: there exists $H \geqslant 0$ such that for each $m \geqslant 0$ 
$$|k_0| P_m + \cdots + |k_m| P_0 \leqslant H Q_m;$$ 
\medbreak 
${\bf R}_{p q}^2$: the sequence $(k_m / Q_m : m \geqslant 0)$ converges to zero. 
\medbreak 
As mentioned in the introduction, the fact that ${\bf R}_{p q}^1$ and ${\bf R}_{p q}^2$ are both necessary and sufficient for the inclusion $(N, p) \rightsquigarrow (N, q)$ between {\it regular} N\"orlunds appeared in [3] and was elaborated in [1] where it becomes Theorem 19. In what follows, we re-examine the line of argument taken in [3] and [1], deliberately stripping regularity hypotheses. 

\medbreak 

Henceforth, we shall write $C_{p q}$ for the linear summation method with matrix $[c_{m, n} : m, n \geqslant 0]$ expressing $(N, q)$ in terms of $(N, p)$ as in Theorem \ref{NN}. 

\medbreak 

On the one hand, we relate inclusion $(N, p) \rightsquigarrow (N, q)$ to regularity of $C_{p q}$. 

\medbreak 

\begin{theorem} \label{inclusionreg}
The inclusion $(N, p) \rightsquigarrow (N, q)$ holds precisely when the linear summation method $C_{p q}$ is regular. 
\end{theorem}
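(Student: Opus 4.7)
The plan is to exploit Theorem \ref{NN}, which exhibits $C_{pq}$ as the matrix that sends the sequence of $(N,p)$-means of $r$ to the sequence of $(N,q)$-means of $r$. The implication ``$C_{pq}$ regular $\Rightarrow (N,p)\rightsquigarrow (N,q)$'' is then essentially a direct substitution: if $r$ is $(N,p)$-convergent to $\sigma$, then $(N^p r)_n \to \sigma$ is a convergent sequence, so regularity of $C_{pq}$ applied to it gives $\sum_n c_{m,n} (N^p r)_n \to \sigma$; by Theorem \ref{NN} this sum equals $(N^q r)_m$, so $r$ is $(N,q)$-convergent to $\sigma$.

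For the reverse implication, the key observation is that the map $r \mapsto N^p r$ is a \emph{bijection} on the space of all sequences. Indeed, the defining relation
\[ P_m (N^p r)_m = p_0 r_m + p_1 r_{m-1} + \cdots + p_m r_0 \]
is a triangular linear system with diagonal entries $p_0 > 0$, so given any target sequence $t = (t_m)$ one may solve recursively
\[ r_m = \frac{1}{p_0}\Bigl( P_m t_m - \sum_{n=0}^{m-1} p_{m-n} r_n \Bigr) \]
for a (unique) sequence $r$ with $N^p r = t$.

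Now assume $(N,p) \rightsquigarrow (N,q)$ and let $t$ be an arbitrary convergent sequence, with limit $\tau$. I would construct $r$ as above so that $N^p r = t$; then $r$ is tautologically $(N,p)$-convergent to $\tau$. By the inclusion hypothesis, $r$ is $(N,q)$-convergent to $\tau$, that is $(N^q r)_m \to \tau$. Invoking Theorem \ref{NN} once more identifies $(N^q r)_m$ with $\sum_n c_{m,n} t_n$, so $C_{pq} t \to \tau = \lim t$, which is regularity of $C_{pq}$. (The sum defining $(C_{pq} t)_m$ is in fact finite, since $c_{m,n}=0$ for $n>m$, so no convergence issue arises in forming it.)

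There is no real obstacle in this argument; the one point that merits emphasis is the invertibility of $r \mapsto N^p r$, which depends crucially on the standing assumption $p_0 > 0$ and which justifies the passage from ``every $(N,p)$-convergent $r$ yields a $(N,q)$-convergent $r$'' (inclusion) to ``every convergent sequence is transformed by $C_{pq}$ to one with the same limit'' (regularity). Without this surjectivity, the inclusion would only control $C_{pq}$ on the image of $N^p$, which a priori could be a proper subspace of convergent sequences.
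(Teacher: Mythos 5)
Your proposal is correct and follows essentially the same route as the paper: both directions rest on Theorem \ref{NN} together with the observation that $r \mapsto N^p r$ is invertible via the triangular system with diagonal $p_0 > 0$, so that regularity of $C_{pq}$ can be tested on arbitrary convergent sequences by pulling them back through $N^p$. Your closing remark on why surjectivity is the crux of the reverse implication is a fair gloss on the step the paper performs silently when it writes ``$s = N^p r$ for a unique sequence $r$.''
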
 

\begin{proof} 
Assume $(N, p) \rightsquigarrow (N, q)$. Let $s = (s_n : n \geqslant 0)$ be any sequence. Note that $s = N^p r$ for a unique sequence $r = (r_n : n \geqslant 0)$ found by recursively solving the triangular Toeplitz system 
$$P_n s_n = p_0 r_n + \cdots + p_n r_0 \; \; \; (n \geqslant 0).$$
According to Theorem \ref{NN}, if $m \geqslant 0$ then 
$$t_m := \sum_{n = 0}^{\infty} c_{m, n} s_n = \sum_{n = 0}^{\infty} c_{m, n} (N^p r)_n = (N^q r)_m.$$
Now, let $s \rightarrow \sigma$: then $N^p r \rightarrow \sigma$ (by choice of $r$) hence $r \xrightarrow{(N, p)} \sigma$ (by definition of $(N, p)$-convergence) so that $r \xrightarrow{(N, q)} \sigma$ (by the $(N, p) \rightsquigarrow (N, q)$ assumption) whence $N^q r \rightarrow \sigma$ (by definition of $(N, q)$-convegence); that is, $t \rightarrow \sigma$. This proves that $C_{p q}$ is regular.  
\medbreak 
Assume that $C_{p q}$ is regular. Let $r = (r_n : n \geqslant 0)$ be $(N, p)$-convergent to $\sigma$: then $N^p r \rightarrow \sigma$ so Theorem \ref{NN} and the regularity of $C_{p q}$ yield $N^q r \rightarrow \sigma$; thus,  $r$ is $(N, q)$-convergent to $\sigma$ also. This proves $(N, p) \rightsquigarrow (N, q)$. 
\end{proof} 

\medbreak 

On the other hand, we relate regularity of $C_{p q}$ to the Riesz conditions ${\bf R}_{p q}$.  

\medbreak 

\begin{theorem} \label{regRiesz}
The linear summation method $C_{p q}$ is regular precisely when the Riesz conditions ${\bf R}_{p q}^1$ and ${\bf R}_{p q}^2$ are satisfied. 
\end{theorem}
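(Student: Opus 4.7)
The plan is to apply the Silverman-Steinmetz-Toeplitz theorem (Theorem \ref{SST}) to the matrix $[c_{m,n} : m, n \geqslant 0]$ of $C_{pq}$ described in Theorem \ref{NN}, and to check one by one that each of the three SST conditions either matches a Riesz condition or is automatically satisfied.

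First, because $c_{m,n} = 0$ whenever $n > m$ and $Q_m > 0$, the row-sum in SST condition (i) collapses to
$$\sum_{n=0}^\infty |c_{m,n}| = \frac{|k_0| P_m + |k_1| P_{m-1} + \cdots + |k_m| P_0}{Q_m},$$
so the existence of a uniform bound $H$ as $m$ varies is precisely ${\bf R}_{pq}^1$.

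Second, condition (iii) holds for free: from the identity $Q_n = k_0 P_n + \cdots + k_n P_0$ established earlier we read off
$$\sum_{n=0}^\infty c_{m,n} = \frac{k_0 P_m + k_1 P_{m-1} + \cdots + k_m P_0}{Q_m} = \frac{Q_m}{Q_m} = 1$$
for every $m$, so no Riesz hypothesis is needed to secure the required limit.

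Finally, condition (ii) asks that $c_{m,n} = k_{m-n} P_n / Q_m \to 0$ for each fixed $n$. Specialising to $n = 0$ and dividing by $P_0 > 0$ extracts ${\bf R}_{pq}^2$ immediately. For the converse, fix $n$ and set $j = m - n$; since $q$ is a N\"orlund sequence the partial sums $Q_\ell$ are non-decreasing in $\ell$, so $Q_{j+n} \geqslant Q_j$ and hence
$$\left| \frac{k_{m-n}}{Q_m} \right| = \frac{|k_j|}{Q_{j+n}} \leqslant \frac{|k_j|}{Q_j} \longrightarrow 0$$
under ${\bf R}_{pq}^2$; multiplication by the fixed constant $P_n$ preserves the limit. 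The only step requiring any real care is this converse half of the (ii)-versus-${\bf R}_{pq}^2$ equivalence, where the monotonicity of $Q$ is used to trade $Q_m$ for $Q_{m-n}$. The rest is essentially bookkeeping with the explicit formulae from Theorem \ref{NN}, and at no stage is regularity of $(N, p)$ or $(N, q)$ invoked.
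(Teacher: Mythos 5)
Your proposal is correct and follows essentially the same route as the paper: apply the Silverman--Steinmetz--Toeplitz theorem to the matrix of $C_{pq}$, identify the row-sum bound with ${\bf R}_{pq}^1$, obtain condition (iii) for free from the identity $Q_m = k_0 P_m + \cdots + k_m P_0$, and handle condition (ii) by specialising to $n = 0$ for necessity and by using the monotonicity $Q_m \geqslant Q_{m-n}$ for sufficiency. The only difference is organisational (you argue condition by condition rather than direction by direction), which changes nothing of substance.
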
 

\begin{proof} 
Assume $C_{p q}$ to be regular and invoke Theorem \ref{SST}. Part (i) furnishes $H \geqslant 0$ such that for each $m \geqslant 0$ 
$$\frac{|k_m| P_0 + \cdots + |k_0| P_m}{Q_m} =  \sum_{n = 0}^{m} \Big|\frac{k_{m - n} P_n}{Q_m}\Big| = \sum_{n = 0}^{\infty} |c_{m, n}| \leqslant H$$
whence ${\bf R}_{p q}^1$ holds. Part (ii) says that $\lim_{m \rightarrow \infty} c_{m, n} = 0$ for each $n \geqslant 0$; in particular, 
$$0 = \lim_{m \rightarrow \infty} c_{m, 0} = \lim_{m \rightarrow \infty} \frac{k_m}{Q_m} P_0$$
whence ${\bf R}_{p q}^2$ holds. 
\medbreak 
Assume that ${\bf R}_{p q}^1$ and ${\bf R}_{p q}^2$ are satisfied. Theorem \ref{SST}(i)  holds because 
$$\sum_{n = 0}^{\infty} |c_{m, n}| = \sum_{n = 0}^{m} \Big|\frac{k_{m - n} P_n}{Q_m}\Big| = \frac{|k_m| P_0 + \cdots + |k_0| P_m}{Q_m} \leqslant H$$
on account of ${\bf R}_{p q}^1$. Theorem \ref{SST}(ii) holds because 
$$|c_{m, n}| = \Big|\frac{k_{m - n} P_n}{Q_m}\Big| \leqslant \frac{|k_{m - n}|}{Q_{m - n}} P_n  \rightarrow 0 \; \; {\rm as} \; \; m \rightarrow \infty$$
on account of ${\bf R}_{p q}^2$. Finally, Theorem \ref{SST}(iii) holds simply because of the relation
$$Q_m = k_0 P_m + \cdots + k_m P_0.$$ Theorem \ref{SST} now guarantees that $C_{p q}$ is regular. 
\end{proof} 

\medbreak 

In conclusion, the Riesz conditions ${\bf R}_{p q}$ are both necessary and sufficient for the inclusion $(N, p) \rightsquigarrow (N, q)$ without any assumptions of regularity. 

\medbreak 

\begin{theorem} 
Let $(N, p)$ and $(N, q)$ be any N\"orlund methods. The inclusion $(N, p) \rightsquigarrow (N, q)$ holds precisely when the Riesz conditions ${\bf R}_{p q}^1$ and ${\bf R}_{p q}^2$ are satisfied. 
\end{theorem}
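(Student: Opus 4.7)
The plan is to derive this theorem as the formal composition of Theorem~\ref{inclusionreg} with Theorem~\ref{regRiesz}. The former equates the inclusion $(N, p) \rightsquigarrow (N, q)$ with regularity of the auxiliary linear summation method $C_{p q}$, and the latter equates regularity of $C_{p q}$ with the conjunction of the Riesz conditions ${\bf R}_{p q}^1$ and ${\bf R}_{p q}^2$. Chaining these two biconditionals produces the desired equivalence in a single line, and that is essentially all the proof itself needs to contain.

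The substantive issue is not computational but conceptual: I must verify that no regularity hypothesis on $(N, p)$ or $(N, q)$ has quietly slipped into either intermediate result. I would audit the three preceding theorems in turn. Theorem~\ref{NN} is established by a purely algebraic rearrangement of finite sums and requires only $p_0 > 0$, which is built into the definition of a N\"orlund sequence. Theorem~\ref{inclusionreg} invokes $p_0 > 0$ a second time, solely to recursively invert the triangular Toeplitz system $P_n s_n = p_0 r_n + \cdots + p_n r_0$ and thereby produce, for any prescribed sequence $s$, a sequence $r$ with $s = N^p r$. Theorem~\ref{regRiesz} in turn verifies the three Silverman--Steinmetz--Toeplitz conditions directly from the formula $c_{m, n} = k_{m - n} P_n / Q_m$, never appealing to convergence of the power series $p(x)$, $q(x)$, or $k(x)$, for which regularity would be required.

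With that audit complete, no further argument is needed: the present theorem is a corollary rather than a genuinely new result, its content lying entirely in the observation that the regularity hypotheses traditionally imposed in the Riesz--Hardy treatment play no role in the underlying logic. I therefore expect no real obstacle beyond this bookkeeping; the one-line proof is simply ``combine Theorem~\ref{inclusionreg} and Theorem~\ref{regRiesz}.''
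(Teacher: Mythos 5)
Your proposal is correct and is exactly the paper's own proof: the theorem is obtained by chaining the biconditionals of Theorem~\ref{inclusionreg} and Theorem~\ref{regRiesz}, and your audit confirming that neither intermediate result smuggles in a regularity hypothesis matches the paper's stated purpose.
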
 

\begin{proof} 
Simply combine Theorem \ref{inclusionreg} and Theorem \ref{regRiesz}. 
\end{proof}

\bigbreak 

\begin{center} 
{\small R}{\footnotesize EFERENCES}
\end{center} 
\medbreak 

[1] G.H. Hardy, {\it Divergent Series}, Clarendon Press, Oxford (1949). 

\medbreak 

[2] N.E. Norlund, {\it Sur une application des fonctions permutables}, Lunds Universitets \r{A}rsskrift (2) Volume 16  Number 3 (1920) 1-10. 

\medbreak 

[3] M. Riesz, {\it Sur l'\'equivalence de certaines m\'ethodes de sommation}, Proceedings of the London Mathematical Society (2) {\bf 22} (1924) 412-419. 

\medbreak 

[4] P.L. Robinson, {\it Finite N\"orlund Summation Methods}, arXiv 1712.06744 (2017).

\medbreak 

[5] G.F. Voronoi, {\it Extension of the Notion of the Limit of the Sum of Terms of An Infinite Series}, Annals of Mathematics (2) Volume 33 Number 3 (1932) 422-428. 

\end{document}